\documentclass[a4paper,12pt]{article}
\usepackage[all]{xy} 
\usepackage{amssymb}
\usepackage{epsfig}
\usepackage{amsfonts}
\usepackage{amsmath}
\usepackage{euscript}
\usepackage{amscd}
\usepackage{amsthm}
\DeclareMathAlphabet{\mathpzc}{OT1}{pzc}{m}{it}

\newtheorem{thm}{Theorem}[section]
\newtheorem{lem}[thm]{Lemma}

\newtheorem{rem}[thm]{Remark}

\newcommand{\m}{\mathpzc{m}}
\newcommand{\n}{\mathpzc{n}}

\newcommand{\bC}{\mathbb C}
\newcommand{\bN}{\mathbb N}

\newcommand{\A}{\mathbb A}
\newcommand{\V}{\mathbb V}

\newcommand{\aut}{\operatorname{Aut_k}}

\newcommand{\ml}{\operatorname{ML}}

\title{A note on double Danielewski surfaces}
\author{Neena Gupta$^*$ and Sourav Sen$^\dagger$\\
{\small{\it $^*$Stat-Math Unit, Indian Statistical Institute,}}\\
{\small{\it 203 B.T. Road, Kolkata 700 108, India.}}\\
{\small{\it e-mail : neenag@isical.ac.in, rnanina@gmail.com}}\\
{\small {\it $^\dagger$Department of Mathematics, SRM University-AP,}}\\
{\small {\it Amaravati, Andhra Pradesh 522502, India.}}\\
{\small{\it e-mail : sourav.s@srmap.edu.in , sourav.sen3@gmail.com}}
}

\begin{document}

\date{}
\maketitle
\abstract{In this note we rectify the proof of Theorem 3.11 in \cite{DD}. We also present a set of examples at the end discussing various cases.} 

\vspace{10mm}

\noindent 
2020 MSC: Primary 14R05; Secondary 13B25 

\vspace{2mm} 

\noindent
Keywords: Isomorphism classes, Automorphisms, Cancellation problem, Danielewski surfaces. 

\section{Introduction}\label{INT}
 Throughout, $k$ denotes a field of any characteristic. 
For $n \ge 1$, the varieties 
$$
V_n:=\{(x,y,z) \in \A^3_k~| x^ny=z^2-1\}
$$ are known as Danielewski surfaces. 
This kind of surfaces was first discovered by 
W. Danielewski in 1989 (\cite{Da}) to provide explicit examples of two-dimensional affine domains
over the field of complex numbers $\bC$ which do not satisfy the cancellation property, i.e., 
$V_n \times \A^1_k \cong V_m \times \A^1_k$ but $\V_n \ncong V_m$ for $n \neq m$. 
Since then Danielewski surfaces and their invariants have been studied by many  researchers.
For instance, any hypersurface in $\A^3_k$ defined
by an equation of the form $x^ny=f(x,z)$, where $n \ge 1$ and $\deg_Z f(0, Z)\ge 2$ is called a Dainelewski surface. 

In \cite{DD}, the authors introduced a new variant of Danielweski surfaces, namely  the double Danielewski surface defined by
\begin{equation}\label{B}
W_{(d,e)}:= \{(x,y,z, t) \in \A^4_k~| x^{d}y - P(x,z)=0,x^{e}t - Q(x,y,z)=0\}
\end{equation}
where $d,e \in \bN$, $P(X,Z)\in k[X,Z]$ is monic in $Z$ and $Q(X,Y,Z)\in k[X,Y,Z]$ is monic in $Y$.
 In \cite{DD}, the authors showed that $W_{(d,e)}$ forms a new family of counterexamples to the cancellation problem. 
Recently, while going through the preprint \cite{PD}, the authors noticed that the
argument on page 35 and line 5 of \cite{DD} does not go through without the hypothesis that $r>1$. 
In this paper we have given an example (Remark \ref{REM} (i)), to show that this hypothesis is necessary.
Further, the arguments in lines 19 and 20 of Page 35  of \cite{DD}  have a gap. In this paper, we have tried to give  complete
arguments for these gaps and hence re-written the proof of Theorem 3.11 of \cite{DD} (Theorem \ref{isomclass}). The statement of the 
present version is more or less similar to the previous version.
We also present Theorem \ref{auto}, the correct version of Theorem 3.13 of \cite{DD}, using the newly formulated classification theorem.
The rest of the paper \cite{DD}  remains unaffected. 
In Remark \ref{REM}, we present explicit examples supporting that Theorem \ref{isomclass} in the present article is the best possible.  
This corrigendum is essential as ideas similar to the proofs of Theorem 3.11 are being used by other authors  (cf. \cite{SZ}, \cite{PD}).

%

%


\section{Isomorphism Classes}\label{ISC}

All rings considered here are commutative with unity. For a ring $R$, the notation $R^*$ will denote the group of units of $R$. Throughout, $k$ denotes a field of any characteristic.

We first prove some elementary lemmas. 

\begin{lem}\label{lem1}
Let $P(X,Z) \in k[X,Z]$  be
a monic polynomial in $Z$ of degree $r\ge 1$. Suppose $h, g \in k[X,Y,Z]$ be such that $\deg_Z h <r$ and $X^d \mid (h + g P)$ in $k[X,Y,Z]$ for some $d>0$. 
Then $X^d \mid g$ and $X^d \mid h$ in $ k[X,Y,Z]$. 
\end{lem}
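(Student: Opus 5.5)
The plan is to reduce modulo $X^d$ and use that division by a monic polynomial is unique. Put $A = k[X,Y,Z]/(X^d) = R[Z]$ with $R = k[X,Y]/(X^d)$. The ring $R$ is not a domain, but this does not matter: division with remainder by a polynomial that is monic in $Z$ is unique over any commutative ring. Write $\bar h$, $\bar g$, $\bar P$ for images in $R[Z]$. Then $\bar P$ is still monic in $Z$ of degree $r$, and $\deg_Z \bar h < r$. The hypothesis $X^d \mid (h+gP)$ says exactly that $\bar h + \bar g \bar P = 0$ in $R[Z]$.

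The first step shows $\bar g = 0$. Suppose $\bar g \neq 0$, and let $c \in R$ be its leading coefficient in $Z$, with $m = \deg_Z \bar g$. Since $\bar P$ is monic, the coefficient of $Z^{m+r}$ in $\bar g\bar P$ is $c \neq 0$; no zero divisors can interfere, because the leading coefficient of $\bar P$ is $1$. So $\deg_Z(\bar g \bar P) = m + r \ge r > \deg_Z \bar h$. Then $\bar h + \bar g \bar P$ has a nonzero coefficient in degree $m+r$, which is a contradiction. Hence $\bar g = 0$, which means $X^d \mid g$ in $k[X,Y,Z]$.

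The second step is immediate: $\bar h = -\bar g\bar P = 0$, so $X^d \mid h$. Alternatively, $X^d$ divides $(h+gP) - gP = h$.

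The only point needing care is the translation between divisibility by $X^d$ in $k[X,Y,Z]$ and vanishing in $R[Z]$. This holds because $k[X,Y,Z]/(X^d) \cong (k[X,Y]/(X^d))[Z]$, so a polynomial is divisible by $X^d$ exactly when each of its $Z$-coefficients is divisible by $X^d$ in $k[X,Y]$. I expect no real obstacle beyond this. One could also argue coefficientwise in $Z$, descending from the top degree, which gives the same conclusion without passing to the quotient ring.
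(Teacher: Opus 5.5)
Your proof is correct and is essentially the paper's argument. The paper takes the largest index $m$ with $X^d \nmid g_m$ and drops the higher $Z$-terms of $g$, since those are divisible by $X^d$. It then notes that the top coefficient $g_m Z^{m+r}$ of $h+\bigl(\sum_{i\le m} g_iZ^i\bigr)P$ would have to be divisible by $X^d$. This is your leading-coefficient comparison in $(k[X,Y]/(X^d))[Z]$, written without passing to the quotient ring.
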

\begin{proof}
Let $g = \sum_{i=0}^{\ell} g_{i} Z^i$ for some integer $\ell$, where $g_{i} \in k[X, Y]~\text{for all}~0 \leq i \leq \ell$.
Suppose $X^{d} \nmid g$. Then $m = \max \{ i \mid X^{d} \nmid g_{i} \}$ exist.
Since $X^d \mid (h + g P)$ in $k[X,Y,Z]$, we have 
\[
X^{d} \mid \left( h + \left(\sum_{i=0}^{m} g_{i} Z^i\right) P(X, Z) \right)
\]
Now the highest degree term of $Z$ in $h(X,Y, Z) + \left(\sum_{i=0}^{m} g_{i} Z^i\right) P(X, Z)$ 
is $g_{m} Z^{m+r}$. This term should also be divisible by $X^{d}$, but by our assumption  $X^{d} \nmid g_{m}$. This is a contradiction. 
Hence $X^d \mid g$ in $ k[X,Y,Z]$ and so $X^d \mid h$.
\end{proof}

\begin{lem}\label{2}
Let $d_1, d_2, e_1, e_2 \in \mathbb{N}$, $P(X, Z) \in k[X, Z]$ be
a monic polynomial in $Z$ of degree $r\ge 1$ and
$Q(X, Y, Z)$ a monic polynomial in $Y$ of degree $s > 1$. Let
\[
B:= \frac{k[X, Y, Z, T]}{(X^{d_1}Y - P(X, Z), X^{e_1}T - Q(X, Y, Z))}.
\]
Let $x$ denote the image of $X$ in $B$. 
Then, 
\begin{enumerate}
\item[\rm(i)] for $d_2 > d_1$, the image of a polynomial of the form
$u X^{d_1} Y + p(X, Z)$ with $\deg_Z p < r$ and $u \in k^*$ is nonzero in
the ring $B/(x^{d_2})$. 
\item[\rm(ii)] for $e_2 > e_1$, the image of a polynomial of the form
$u X^{e_1}T + q(X, Y, Z)$ with $\deg_Y q < s$ and $u \in k^*$ is nonzero in the ring $B/(x^{e_2})$. 
\end{enumerate}
\end{lem}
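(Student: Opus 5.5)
The plan for (i) is to eliminate $Y$ with the first defining relation and reduce to a divisibility statement in $k[X,Z]$, where Lemma \ref{lem1} applies. In $B$ we have $x^{d_1}y=P(x,z)$. So the image of $uX^{d_1}Y+p(X,Z)$ equals $uP(x,z)+p(x,z)$, which lies in the subring $k[x,z]\subseteq B$. Suppose, for a contradiction, that this element lies in $x^{d_2}B$. The whole argument then rests on one structural fact: for every $n>d_1$,
$$k[x,z]\cap x^{n}B\;\subseteq\;\bigl(x^{n},\,x^{n-d_1}P(x,z)\bigr)k[x,z],$$
and $k[x,z]\cong k[X,Z]$ (no relation among $x,z$).

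To prove this fact I would write $B=A[t]$ with $A=k[x,y,z]\cong k[X,Y,Z]/(X^{d_1}Y-P)$ and $x^{e_1}t=Q(x,y,z)$. I would use normal forms. Since $Q$ is monic in $Y$ of degree $s$ and $P$ is monic in $Z$ of degree $r$, the relations let every element of $B$ be written uniquely as a $k[x]$-combination of the monomials
\[
z^i,\qquad x^{a}y^jz^i\ (j\ge1,\ i<r),\qquad \text{and the corresponding monomials with powers of } t,
\]
with degree restrictions coming from $j<s$ where $t$ occurs. Uniqueness I would check by passing to $B[x^{-1}]=k[x^{\pm1},z]$ and using that $B$ is a domain with $x$ a nonzerodivisor.

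With the normal form available, write $b=\sum(\text{normal monomials})$ and expand $x^{n}b$. Then read off which monomials can land in $k[x,z]$. Rewriting $x^{n}\cdot x^{a}y^{j}z^{i}$ through $x^{d_1}y=P$ (and $x^{e_1}t=Q$ through $y$) contributes only multiples of $x^{n-jd_1}P^{j}$ and of $x^{n}$. The $j=1$ term is exactly $x^{n-d_1}P$, and higher terms lie in the same ideal. This gives the displayed inclusion.

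Granting the fact, we get $uP+p=X^{d_2}a+X^{d_2-d_1}P\,c$ in $k[X,Z]$. Hence $X^{d_2-d_1}$ divides $p+(u-X^{d_2-d_1}c)P$. Since $\deg_Zp<r$, Lemma \ref{lem1} (with $d$ replaced by $d_2-d_1\ge1$) gives $X\mid u-X^{d_2-d_1}c$. Therefore $X\mid u$, contradicting $u\in k^*$.

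Part (ii) follows the same scheme with the roles shifted. In $B$ we have $x^{e_1}t=Q$, so the element becomes $uQ(x,y,z)+q(x,y,z)\in A$. The analogous fact is $A\cap x^{n}B\subseteq x^{n}A+x^{n-e_1}Q\,A$ for $n>e_1$, proved with the same normal form, now tracking $t$-degrees. Then I would apply the analogue of Lemma \ref{lem1} with $Y$ in place of $Z$ and $Q$ (monic in $Y$, degree $s$) in place of $P$. That analogue has the same proof, once elements of $A$ are represented with $\deg_Z<r$ in the $y$-monomials so that $A$ behaves like a polynomial ring in $Y$ over $k[X,Z]$. Its conclusion gives $X\mid u$, again a contradiction.

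The main obstacle is establishing the normal form and the intersection inclusion rigorously: the mixing between $y$- and $t$-monomials needs care. I would first attempt it via the embedding $B\hookrightarrow k[x^{\pm1},z]$ together with a valuation or degree count in $x$.
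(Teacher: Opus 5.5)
There is a genuine gap. Your argument hinges on the inclusion $k[x,z]\cap x^nB\subseteq (x^n,x^{n-d_1}P)k[x,z]$ for $n>d_1$, and this inclusion is false.

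The $y$-monomials already break it. Take $n=2d_1>d_1$. Then $P(x,z)^2=x^{2d_1}y^2$ lies in $k[x,z]\cap x^{2d_1}B$. On the other hand $(x^{2d_1},x^{d_1}P)k[x,z]\subseteq xk[x,z]$, while $P(0,z)^2\neq 0$. So the terms $x^{n-jd_1}P^j$ with $j\ge 2$ do \emph{not} lie in the same ideal.

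The more serious problem is that your proof of (i) never uses $s>1$, and (i) is false for $s=1$. If $Q=Y+q_1(X,Z)$, then $x^{d_1+e_1}t=P+x^{d_1}q_1$ lies in $k[x,z]\cap x^{d_1+e_1}B$ and is congruent to $P(0,z)$ modulo $x$. Concretely, take $B=k[X,Y,Z,T]/(XY-Z^2,X^2T-Y)$ from Remark \ref{REM}(ii), with $d_2=2$, $u=1$, $p=0$. Then $xy=x^3t\in x^2B$. So your assertion that rewriting $x^{e_1}t=Q$ ``through $y$'' only produces multiples of $x^{n-jd_1}P^j$ and $x^n$ is exactly where the argument fails. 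Any correct proof must use $s>1$ at that point.

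The analogous inclusion in (ii) fails the same way. We have $Q^2=x^{2e_1}t^2\in A\cap x^{2e_1}B$, but $Q^2\notin xA$: indeed $A/xA\cong (k[Z]/(P(0,Z)))[Y]$, and $Q(0,Y,Z)^2$ is monic of degree $2s$ there. A smaller issue: your normal form is not unique as stated, since $x^{d_1}y=P(x,z)$ writes a $k[x]$-multiple of $y$ as a combination of the $z^i$.

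Your final step would survive if you aimed only at weaker targets. For (i) the target is $k[x,z]\cap x^nB\subseteq (x,P^2)k[x,z]$ for $n>d_1$; this suffices because $uP(0,z)+p(0,z)$ is nonzero of $z$-degree $r<2r$. For (ii) the target is $A\cap x^nB\subseteq xA+Q^2A$ for $n>e_1$; this suffices because the image of $uQ+q$ in $(k[Z]/(P(0,Z)))[Y]$ has degree exactly $s<2s$ with unit leading coefficient.

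For (ii) this weaker inclusion follows quickly. Compare the $T^1$-coefficients of a relation in $k[X,Y,Z,T]$, and use that $x$ is a nonzerodivisor on $A/QA$.

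For (i), however, you must show that when $s\ge 2$ the $t$-contributions are harmless (e.g.\ $x^{sd_1+e_1}t\equiv P(0,z)^s \pmod{x}$). That is the real content of the lemma, and it is missing from your sketch.

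The paper gets this without normal forms:
\begin{enumerate}
\item[(1)] Set $T=0$ and reduce $g_1,g_2$ modulo $Q$ to $Y$-degree $<s$, giving $uX^{d_1}Y+p=g_1X^{d_2}+g_2(X^{d_1}Y-P)-g_3Q$.
\item[(2)] Compare $Y$-degrees and $Y^s$-coefficients to get $g_3=\Lambda X^{d_1}$. This is where $s>1$ enters, since $uX^{d_1}Y$ has no $Y^s$-term.
\item[(3)] Lemma \ref{lem1} then gives $X^{d_1}\mid g_2$ and $X^{d_1}\mid p$.
\item[(4)] Dividing by $X^{d_1}$ and setting $X=0$ leaves $uY+g_6(0,Z)=-g_5(0,Y,Z)P(0,Z)$, which is impossible by comparing $Z$-degrees.
\end{enumerate}
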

\begin{proof} 
(i) Suppose, if possible,  that there exist  $ h_1, h_2, h_3 \in k[X, Y, Z, T]$
such that
\[
u X^{d_1} Y +p(X, Z) = h_1 X^{d_2} + h_2 (X^{d_1}Y - P(X, Z)) + h_3 (X^{e_1}T - Q(X, Y, Z))
\]
Putting $T = 0$, we have
\begin{equation}\label{eq1}
u X^{d_1} Y + p(X, Z) = g_1 X^{d_2} + g_2 (X^{d_1} Y - P(X, Z)) - g_3 Q(X, Y, Z), 
\end{equation}
where $g_i = h_i(X, Y, Z, 0)$.
Dividing $g_1$ and $g_2$ by $Q$, we may assume
that $\deg_Y g_1 < s$
and $\deg_Y g_2 < s$.
Now comparing the $Y$-degree on both sides and using the fact that $s > 1$, we have
$\deg_Yg_3=0$.
Let
\begin{equation}\label{eq223}
g_2 = \Lambda Y^{s-1} + g_4 \quad \text{for some~} \Lambda \in k[X, Z] \text{ and } g_4 \in k[X, Y, Z] \text{ with} \deg_Y g_4 < s-1.
\end{equation}
Then by equation (\ref{eq1}), comparing leading coefficients of $Y$, we have
\begin{equation}\label{eq2}
g_3 = \Lambda X^{d_1}
\end{equation}
Now from equations (\ref{eq1}) and (\ref{eq2}), we have
\begin{equation}\label{eq3}
X^{d_1} \mid (p(X, Z) + g_2 P(X, Z)) 
\end{equation}
%
%
%
%
Therefore by Lemma \ref{lem1}, $X^{d_1} \mid g_2$ and hence $X^{d_1} \mid p(X, Z)$.  Let $g_2 = X^{d_1} g_5$
and $p(X, Z) = X^{d_1} g_6$, for some $g_5 \in k[X,Y,Z]$ and $g_6\in k[X,Z]$.
By equation (\ref{eq223}),  as $X^{d_1} \mid g_2$, we have $X^{d_1}\mid \Lambda$. Let $\Lambda= X^{d_1}g_7$
for some $g_7 \in k[X,Z]$. 
Hence equation (\ref{eq1}) becomes
\[
u X^{d_1} Y + X^{d_1} g_6(X, Z) = g_1 X^{d_2} + X^{d_1} g_5 (X^{d_1} Y - P(X, Z)) - X^{2d_1} g_7  Q(X, Y, Z)
\]
which implies
\[
 u Y + g_6(X, Z) = g_1 X^{d_2 - d_1} + g_5 (X^{d_1} Y - P(X, Z)) -  X^{d_1}g_7 Q(X, Y, Z).
\]
As $d_2>d_1$, putting $X = 0$ in above, we have
\[
u Y + g_6(0, Z) = - g_5(0, Y, Z) P(0, Z),
\]
a contradiction as {L.H.S} is non zero with $Z$-degree $< r$, whereas $Z$-degree on R.H.S is $\ge r$.

\medskip

\noindent
(ii)  Suppose, if possible, that there exist $h_1, h_2, h_3 \in k[X,Y,Z,T]$ satisfying
\begin{equation}\label{11}
u X^{e_1}T + q(X,Y,Z)
= h_1 X^{e_2} + h_2 (X^{d_1} Y - P(X,Z)) + h_3 (X^{e_1}T - Q(X,Y,Z)).
\end{equation}
If necessary, dividing by the monic polynomial $X^{e_1}T - Q(X,Y,Z)$ (in $Y$), we may assume that
$\deg_Y h_1 < s$ and $\deg_Y h_2 < s$. 
Let
\[
h_i = h_{i1}(X,Y,Z) + T h_{i2}(X,Y,Z,T), \quad \text{for}~ i = 1,2,3.
\]
Note that $\deg_Y h_{ij} < s$ for $i=1,2$ and $j=1,2$.
Hence from equation (\ref{11}), we have
\begin{equation}\label{22}
q(X,Y,Z)
= h_{11} X^{e_2} + h_{21}(X^{d_1} Y - P(X,Z)) - h_{31} Q(X,Y,Z),
\end{equation}
and
\begin{equation}\nonumber
u X^{e_1}T
= Th_{12} X^{e_2} + Th_{22}(X^{d_1} Y - P(X,Z)) + Th_{32}(X^{e_1}T - Q(X,Y,Z)) + h_{31} X^{e_1}T,
\end{equation}
i.e.,
\begin{equation}\label{33}
u X^{e_1}
= h_{12} X^{e_2} + h_{22}(X^{d_1} Y - P(X,Z)) + h_{32}(X^{e_1}T - Q(X,Y,Z)) + h_{31} X^{e_1}.
\end{equation}
Now comparing $Y$-degree on both sides of equation (\ref{22}), as $\deg_Y q<s$, we see that 
$
\deg_Y h_{31}=0$.
Let 
\[
h_{21} = \alpha_1 Y^{s-1} + \beta_1, \text{~for some~} \alpha_1 \in k[X,Z], \beta_1 \in k[X,Y,Z] \text{~with~} \deg_Y \beta_1 < s-1.
\]
Then by equation (\ref{22}), as $\deg_Y q<s$, we have
\[
h_{31}= \alpha_1 X^{d_1} .
\]
Hence, now comparing $Y$-degree on both sides of equation (\ref{33}), using the observation that $\deg_Yh_{31}=0$, we see that
$
\deg_Y h_{32}=0$.
Let
\[
h_{22} = \alpha_2 Y^{s-1} + \beta_2, \text{~where~} \alpha_2 \in k[X,Z, T], \beta_2 \in k[X,Y,Z, T]~\text{with}~ \deg_Y \beta_2 < s-1.
\]
Then
\[
h_{32} = \alpha_2 X^{d_1}.
\]
Hence equation (\ref{33}) becomes
\begin{equation}\label{44}
u X^{e_1}= h_{12} X^{e_2}+ (\alpha_2 Y^{s-1} + \beta_2)(X^{d_1}Y - P(X,Z))+ \alpha_2X^{d_1}(X^{e_1}T - Q(X,Y,Z))
+ \alpha_1X^{e_1 + d_1}.
\end{equation}
Let
\[
\ell := \max \{ i \mid X^i \text{~divides~} h_{22}= \alpha_2 Y^{s-1} + \beta_2\}.
\]
Set $\alpha_2= X^{\ell}\alpha_3$ and $\beta_2=X^{\ell}\beta_3$, where $\alpha_3 \in k[X,Z,T]$ and $\beta_3 \in k[X,Y,Z,T]$.

Suppose if possible that $\ell \le e_1$.  From equation (\ref{44}), we have 
\begin{equation}
u X^{e_1 - \ell} = h_{12} X^{e_2 - \ell} + (\alpha_3 Y^{s-1} + \beta_3)(X^{d_1}Y - P(X,Z)) + \alpha_3 X^{d_1}(X^{e_1}T - Q(X,Y,Z)) 
+\alpha_1 X^{d_1+e_1 - \ell}.
\end{equation}
\medskip
Putting $X=0$ on both sides, we have
either 
$$
\left(\alpha_3(0,Z,T)Y^{s-1}+\beta_3(0, Y,Z,T)\right)P(0,Z)=0
~~(\text{if}~\ell < e_1)$$
or 
$$
\left(\alpha_3(0,Z,T)Y^{s-1}+\beta_3(0, Y,Z,T)\right)P(0,Z)=u~~(\text{if}~\ell = e_1)
$$
But as $P(0,Z) \notin k$ and $u \in k^*$, none of the above is a possibility. 
Therefore, $\ell > e_1.$
Then from equation (\ref{44}), we have
\begin{equation}\label{55}
u= h_{12} X^{e_2-e_1}+ X^{\ell-e_1}(\alpha_3 Y^{s-1} + \beta_3)(X^{d_1}Y - P(X,Z))+ \alpha_3X^{d_1+\ell-e_1}(X^{e_1}T - Q(X,Y,Z))
+ \alpha_1X^{d_1}.
\end{equation}
As $e_2>e_1$, R.H.S is divisible by $X$, but L.H.S  is not divisible by $X$. This is a contradiction. 
\end{proof}

We now present the revised  statement and proof of Theorem 3.11 of \cite{DD}.
Let $$
B_1= \dfrac{k[X,Y,Z,T]}{(X^{d_1}Y - P_1(X,Z), X^{e_1}T - Q_1(X,Y,Z))} 
$$
and 
$$
B_2= \dfrac{k[X,Y,Z,T]}{(X^{d_2}Y - P_2(X,Z), X^{e_2}T - Q_2(X,Y,Z))},
$$
where $d_1,e_1,d_2,e_2 \in \bN$, $P_1(X,Z),P_2(X,Z) \in k[X,Z]$ are monic polynomials in $
Z$,
$Q_1(X,Y,Z)$, $Q_2(X,Y,Z) \in k[X,Y,Z]$ are monic polynomials in $Y$, 
with $r_i=\deg_ZP_i(X,Z)$ and  $s_i=\deg_YQ_i(X,Y,Z)$ for $i=1, 2$.
Let $x_1,y_1,z_1,t_1$ and $x_2,y_2,z_2,t_2$ denote the images of $X,Y,Z,T$ in $B_1$ and $B_2$ respectively. 

Suppose that, for $i=1, 2$,   $r_i, s_i, e_i$ satisfy
\begin{eqnarray}\label{mlc}
\text{~either~} r_i\ge 2 \text{~and~} s_i \ge 2 \nonumber\\
\text{~or~} r_i \ge 2\text{~and~} s_i=1  \\
\text{~or~} r_i=1, s_i \ge 2 \text{~and~} e_i\ge 2 \nonumber .
\end{eqnarray}
Then, from \cite[Theorem 3.9]{DD}, $\ml(B_i)= k[x_i]$ for $i=1,2$. 

\begin{thm}\label{isomclass} 
Let $\psi: B_2\to B_1$ be an isomorphism with $r_i>1$ for $i=1,2$.  
Then the following conditions hold:
\begin{enumerate}
\item[\rm(I)] $(r_1, s_1)=(r_2, s_2)$.  Let $(r,s):=(r_i, s_i)$ for $i=1,2$. 
Moreover, there exist $\lambda, \gamma \in k^*$ and $\delta(X) \in k[X]$ such that
$$
\psi(x_2)=\lambda x_1,~ \psi(z_2)=\gamma z_1+ \delta(x_1),
$$
\item[\rm(II)] If $s>1$, then $(d_1,e_1)=(d_2,e_2)$.  Let $(d,e)=(d_i, e_i)$ for $i=1,2$. Further, there exists $f(X,Z) \in k[X,Z]$ with $\deg_Z f<r$  such that 

(i) $\psi(y_2)= \nu y_1+g(x_1, z_1)$, where $\nu= \lambda^{-d}\gamma^r$ and $g(X,Z)= \lambda^{-d}f(X,Z)$.

(ii) $P_2(\lambda X, \gamma Z+ \delta(X))= \gamma^r P_1(X, Z)+ X^df(X,Z)$.

In particular, $P_2(0,\gamma Z+ \delta(0)) = \gamma^r P_1(0,Z)$. 

Hence, if we set $X_2:= \lambda X$, $Z_2:= \gamma Z+\delta(X)$, $Y_2:=\nu Y+g(X,Z)$ and $G:= P_1(X,Z)-X^dY$, then 
\begin{enumerate}
\item[\rm(a)] $k[X,Y,Z]= k[X_2, Y_2, Z_2]$.
\item[\rm(b)] $P_2(X_2, Z_2)-X_2^d Y_2=\gamma^r( P_1(X,Z)-X^dY)=\gamma^r G$.
\item[\rm(c)] $(Q_2(X_2,Y_2,Z_2), P_2(X_2, Z_2)-X_2^d Y_2, X_2^e)k[X,Y,Z]=(Q_1(X,Y,Z), G, X^e)k[X,Y,Z]$. 
\end{enumerate}
In particular,
$$
Q_2(0,\nu Y + g(0,Z), \gamma Z+ \delta(0)) = \nu^s Q_1(0,Y,Z)+h(Y,Z) P(0, Z)
$$ 
for some $h(Y, Z) \in k[Y,Z]$ with $\deg_Yh <s$. 
\end{enumerate}
Conversely, if  $s>1$ and conditions (I) and (II) hold, then $B_1 \cong B_2$. 

\end{thm}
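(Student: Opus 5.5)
Since $r_i>1$, the hypothesis (\ref{mlc}) holds and \cite[Theorem 3.9]{DD} gives $\ml(B_i)=k[x_i]$. An isomorphism preserves the Makar-Limanov invariant, so $\psi(k[x_2])=k[x_1]$, hence $\psi(x_2)=\lambda x_1+\mu$ with $\lambda\in k^*$. I would show $\mu=0$ by noting that $x_i$ is characterised up to scalar as the generator of $k[x_i]$ whose fibre over $0$ is non-reduced/reducible (the other fibres $x_i=c\neq 0$ are $\A^1$-fibrations with $B_i[x_i^{-1}]=k[x_i,x_i^{-1},z_i]$ trivial). More concretely, $B_i/(x_i-c)\cong k[Z]$ for $c\ne0$, while $B_i/(x_i)=k[Y,Z,T]/(P_i(0,Z),Q_i(0,Y,Z))$ is not a domain polynomial ring, since $r_i>1$; this forces $\mu=0$. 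Passing to $B_i/x_iB_i$, and using that $B_i[x_i^{-1}]=k[x_i,x_i^{-1}][z_i]$, I would get $\psi(z_2)\in k[x_1,x_1^{-1}][z_1]\cap B_1$. For the $Z$-degree I would use the $k[x_1]$-filtration of $B_1$ via the basis $z_1^iy_1^jt_1^l$ ($i<r$, $j<s$). Comparing the structure of the $k$-algebras $B_i/(x_i)$, namely $k[Z]/(P_i(0,Z))\otimes$ an extension of rank $s_i$, yields $r_1s_1$-type invariants. Then $\psi(z_2)=\gamma z_1+\delta(x_1)$, which follows because $\psi(z_2)$ must be integral of degree $r$ over $k[x_1]$ modulo $x_1$, giving $(r_1,s_1)=(r_2,s_2)$. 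This proves (I).

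For (II) with $s>1$, the key is to compare $d_1,d_2$ using Lemma~\ref{2}(i). Apply $\psi$ to $x_2^{d_2}y_2=P_2(x_2,z_2)$ to get $\lambda^{d_2}x_1^{d_2}\psi(y_2)=P_2(\lambda x_1,\gamma z_1+\delta(x_1))$. Reducing the right side by $P_1$ in $Z$ gives $\gamma^rP_1+p(x_1,z_1)$ with $\deg_Z p<r$, and $\gamma^rP_1(x_1,z_1)=\gamma^rx_1^{d_1}y_1$. So if $d_2>d_1$, the element $\gamma^rx_1^{d_1}y_1+p$ would vanish in $B_1/(x_1^{d_2})$, contradicting Lemma~\ref{2}(i). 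By symmetry, using $\psi^{-1}$, $d_1=d_2=d$. Then Lemma~\ref{lem1} shows $X^d\mid p$; writing $p=X^df$ gives (ii), and dividing by $x_1^d$ (as $x_1$ is a nonzerodivisor) gives (i). The same argument applied to $x_2^{e_2}t_2=Q_2$, using Lemma~\ref{2}(ii) after reducing $Q_2(\lambda x_1,\psi(y_2),\psi(z_2))$ modulo $Q_1$ in $Y$ and modulo $P_1-X^dY$ in $Z$, gives $e_1=e_2$.

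Statements (a) and (b) are direct substitutions, since $(X,Y,Z)\mapsto(X_2,Y_2,Z_2)$ is triangular with unit coefficients. For (c), note that $\psi$ sends $(x_2^e)$ to $(x_1^e)$. Moreover $k[X,Y,Z]/(Q_1,G,X^e)\cong B_1/x_1^eB_1$ restricted to the subring generated by $x,y,z$; more precisely the ideal $(Q,G,X^e)$ is the kernel of $k[X,Y,Z]\to B_1/x_1^eB_1$. This is checked via the free basis in $T$: since $t_1$ appears only in $x_1^et_1$, the kernel equals $(G,Q_1,X^e)$. Since $\psi$ is compatible with these maps, the two kernels agree, and setting $X=0$ gives the final identity with $h$ chosen by reduction modulo $P(0,Z)$. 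For the converse, given (I),(II), define $X\mapsto X_2$, $Y\mapsto Y_2$, $Z\mapsto Z_2$. By (c), $Q_2(X_2,Y_2,Z_2)=aQ_1+bG+cX^e$, and one checks that $a\equiv\nu^s$ is a unit modulo $(G,X)$. Then send $T\mapsto$ a suitable $a'T+c'$, making the two ideals equal.

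The main obstacle is (I): pinning down $\mu=0$ and the linear form of $\psi(z_2)$, together with the equality $r_1=r_2$. I expect the cleanest route is via $B_i[x_i^{-1}]$, treating the conductor-like structure rather than brute force.
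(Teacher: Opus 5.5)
\textbf{Verdict: part (I) has a genuine gap; part (II) is essentially the paper's argument.} Your part (II) uses Lemma \ref{2}(i) for $d$, Lemma \ref{2}(ii) for $e$, and gets (c) from the kernel of $k[X,Y,Z]\to B_1/x_1^eB_1$. Your fibre argument for $\mu=0$ is a valid alternative to the paper's. If $\mu\neq 0$, $\psi$ would identify $B_2/x_2B_2=D_2[T]$ with $B_1/(x_1+\mu\lambda^{-1})B_1\cong k^{[1]}$. Here $D_2=k[Y,Z]/(P_2(0,Z),Q_2(0,Y,Z))$ is a finite $k$-algebra of dimension $r_2s_2\ge 2$, whereas in $k^{[1]}$ only the elements of $k$ are integral over $k$.

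\textbf{The gap in (I): the linear form of $\psi(z_2)$.} This is exactly where $r>1$ is needed. The sentence ``$\psi(z_2)$ must be integral of degree $r$ over $k[x_1]$ modulo $x_1$'' depends only on the image of $\psi(z_2)$ in $B_1/x_1B_1$. So it cannot even give $\psi(z_2)\in k[x_1,z_1]$ (e.g.\ $z_1+x_1t_1$ has the same image as $z_1$), let alone $\psi(z_2)=\gamma z_1+\delta(x_1)$ with $\gamma\in k^*$. The missing argument is as follows.
(1) Since $\psi(x_2)=\lambda x_1$, $\psi$ extends to an isomorphism $B_2[x_2^{-1}]=k[x_2,x_2^{-1}][z_2]\to k[x_1,x_1^{-1}][z_1]=B_1[x_1^{-1}]$. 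Hence $\psi(z_2)$ is a coordinate over $k[x_1,x_1^{-1}]$, and $x_1^m\psi(z_2)=\alpha z_1+\beta$ with $m\ge 0$ minimal, $\alpha=cx_1^i$ ($c\in k^*$, $i\ge 0$) and $\beta\in k[x_1]$.
(2) If $m>0$, then $\alpha(0)z_1+\beta(0)$ is a nonzero element of $x_1B_1\cap k[z_1]=(P_1(0,z_1))k[z_1]$ of degree at most $1$. This is impossible since $r_1\ge 2$.
So $\psi(z_2)\in k[x_1,z_1]$; symmetrically $\psi^{-1}(z_1)\in k[x_2,z_2]$, and therefore $i=0$. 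Remark \ref{REM}(i) ($z_2=x^{d-2}z_1$ when $r=1$) shows this step cannot be bypassed. In your sketch, $r>1$ enters only through $\mu=0$.

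\textbf{Second gap: $(r_1,s_1)=(r_2,s_2)$ is not proved.} Once the linear form is known, $r_1=r_2$ follows from the equality of ideals $(x_1,P_1(0,z_1))=(x_1,P_2(0,\gamma z_1+\delta(0)))$ of $k[x_1,z_1]$, both being $x_1B_1\cap k[x_1,z_1]$. But ``$r_1s_1$-type invariants'' is no argument for $s_1=s_2$. The number $\dim_kD_i=r_is_i$ is not an evident invariant of the $k$-algebra $D_i[T]$: if $D_i$ is non-reduced, $D_i$ is not even the set of elements of $D_i[T]$ integral over $k$. You need something like the paper's argument:
\begin{itemize}
\item the linear form shows that $\psi$ carries $R=k[z]/(P(0,z))$ onto itself;
\item so one can pass to the fibres over the maximal ideals of $R$ and extend scalars to an algebraic closure $L$;
\item then compare the decompositions $\prod_iL[U,T]/(U^{b_i})$, whose exponents sum to $s$.
\end{itemize}

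\textbf{Smaller points.}
\begin{itemize}
\item For $e_1=e_2$, do not reduce modulo $P_1-X^dY$ in $Z$. That reintroduces $X^dY$ and can push the $Y$-degree to $s$ or more, destroying the form required by Lemma \ref{2}(ii). It suffices that $Q_2(X_2,Y_2,Z_2)-\nu^sQ_1$ already has $Y$-degree $<s$.
\item In the converse, with $T\mapsto\lambda^{-e}(at_1+c)$, you must still prove surjectivity. From $aw=1+uG+vX^e$ (which exists since $a$ is a unit modulo $(G,X^e)$), you get $t_1=\lambda^e w\,\phi(t_2)-wc-vQ_1$, evaluated at $(x_1,y_1,z_1)$ and using $x_1^et_1=Q_1$.
\item Injectivity then follows because both rings are two-dimensional affine domains.
\end{itemize}
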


\begin{proof}
(I) Let $\psi : B_1 \rightarrow B_2$ be a $k$-algebra isomorphism. Replacing $B_1$ by $\psi(B_1)$, 
we may assume that $B_1=B_2=B$. By \cite[Theorem 3.9]{DD}, $\ml(B) = k[x_1] = k[x_2]$ and hence 
$$
x_2= \lambda x_1 + \mu
$$
for some $\lambda \in k^{*}, \mu \in k$ and hence $k(x_1)[z_1]=k(x_2)[z_2]$. 

Thus $\gamma_2 z_2 = \gamma_1 z_1 + \delta_1$ for some $\gamma_1, \gamma_2, \delta_1 \in k[x_1]$. Since $B \cap k(x_1)=k[x_1]$, we may assume that gcd$(\gamma_1, \gamma_2)=1$. Since $z_2 \in B \subseteq k[x_1, x_1^{-1}, z_1]$, 
it follows that, 
$\gamma_2 =\epsilon x_1^j$ for some $\epsilon \in k^*$ and $j \ge 0$. 
If $j >0$, then $\gamma_1(0)\neq 0$ and $\gamma_1(0)z_1+\delta_1(0) \in x_1B\cap k[z_1] = (P_1(0, z_1))k[z_1]$, which is not possible as $\deg_ZP_1=r_1>1$.  Therefore, $j=0$. By similar arguments, we have $z_1\in k[x_2, z_2]$ and hence there exist $\gamma \in k^{*}$ and $\delta(x_1) \in k[x_1]$ such that
\begin{equation}\label{z2}
z_2 = \gamma z_1 + \delta(x_1).
\end{equation}
Hence
\begin{equation}\label{kxz}
k[x_1,z_1] = k[x_2,z_2]=E ~(\text{say}).
\end{equation} 
As $y_2  \in B \subseteq k[x_1,{x_1}^{-1},z_1]$, 
there exists an integer $n \geq 0$ 
such that $x_1^{n}y_2 \in k[x_1,z_1]$.
Therefore, since  
$$
{P_2(x_2,z_2)}= {x_2^{d_2}}y_2= (\lambda x_1 + \mu)^{d_2}y_2,
$$
we have
${x_1^{n}P_2(x_2,z_2)} \in {(\lambda x_1 + \mu)^{d_2}}k[x_1,z_1]$. 
If $\mu \neq 0$, then $(\lambda x_1 + \mu ) \arrowvert P_2(x_2,z_2)$ in $k[x_1,z_1]$, i.e., 
$(\lambda x_1 + \mu ) \arrowvert P_2(\lambda x_1 + \mu,\gamma z_1 + \delta(x_1) )$ in 
$k[x_1,z_1]$. Since $\lambda, \gamma \in k^{*}$, this contradicts that 
$P_2(X,Z)$ is monic in $Z$. Therefore, $\mu=0$ and 
\begin{equation}\label{x2}
x_2 = \lambda x_1
\end{equation}
for some $\lambda \in k^{*}$. 
Therefore, using (\ref{kxz}), we have 
$x_1B \cap E= x_2B \cap E$. 
Hence, 
\begin{equation}\label{pz}
(x_1, P_1(0, z_1))E= (x_2, P_2(0, z_2))E.
\end{equation}
Thus, using (\ref{x2}) and considering the ring $E/x_1E$ we have 
\begin{equation}\label{pzdeg}
(P_1(0, \bar{z}_1))k[\bar{z}_1]= (P_2(0, \bar{z}_2))k[\bar{z}_2],
\end{equation}
where $\bar{z}_1$ and $\bar{z}_2$ denote the images of $z_1$ and $z_2$ in $E/x_1E$. 
Hence, by (\ref{z2})
\begin{equation}\label{r}
r_1 (=\deg_Z P_1)= r_2 (=\deg_Z P_2) =r \text{~say}
\end{equation}
and
\begin{equation}\label{p2z}
P_1(0, \bar{z}_1)= \epsilon P_2(0, \bar{z}_2)=\epsilon P_2(0,\gamma \bar{z}_1 + \delta(0)),
\end{equation}
where $\epsilon \gamma^r=1$. Therefore, 
\begin{equation}\label{relp}
P_2(x_2, z_2)=\gamma^r P_1(x_1,z_1)+x_1p(x_1,z_1),
\end{equation}
for some $p
\in E$ with $\deg_Z p(X,Z)<r$.
Let $R:= k[Z]/(P_2(0, Z))$. 
From (\ref{x2}), we have
$B/x_1B\cong B/x_2B$. Hence there exists an isomorphism 
$$
\phi: \dfrac{k[Z,Y,T]}{(P_2(0, Z), Q_2(0, Y,Z))}\to \dfrac{k[Z,Y,T]}{(P_1(0, Z), Q_1(0, Y,Z))},
$$
such that $\phi({Z})=\gamma {Z}+\delta(0)$. Thus from (\ref{p2z}),
$\phi(R)=R$. Let $\m$ be a maximal ideal of $R$ and $\n=\phi(\m)$. Then $\phi$-induces an isomorphism of the rings
$$
\bar{\phi}: \frac{(R/\m)[Y,T]}{(Q_2)}\to \frac{(R/\n)[Y,T]}{(Q_1)},
$$
where $\bar{\phi}(R/\m)=R/\n$. 
Note that $\overline{Q_2}$ is a monic polynomial in $Y$ with coefficients in  the field $R/\m$. 
Let $L$ be an algebraic closure of $R/\m$. Then $\overline{Q_2}$ factorises into linear factors, say $\overline{Q_2}= a_1^{b_1}\cdots a_n^{b_n}$, where $a_1, \dots, a_n$ are co-prime linear polynomials in $Y$ with coefficients in $L$ with multiplicities $a_1, \dots, a_n$. 
Hence
\begin{equation}
\frac{L[Y,T]}{(Q_2)}\cong \frac{L[Y,T]}{(a_1^{b_1})}\times \cdots\times \frac{L[Y,T]}{(a_n^{b_n})}
\end{equation}
The isomorphism 
$\bar{\phi}$ extends to an isomorphism of the rings
\[
\tilde{\phi}: \frac{L[Y,T]}{(Q_2)}\to \frac{L[Y,T]}{(Q_1)}
\]
Hence, $\frac{L[Y,T]}{(Q_1)}$ should also factorise into similar number of components 
with similar nilpotency factor. 
Hence, 
\begin{equation}
\deg_YQ_2=s_2=\sum_{i=1}^n b_i=\deg_YQ_1=s_1=s (\text{say}).
\end{equation}

\medskip

\noindent
(II) We now show that $d_1=d_2$ if $s>1$. 
Suppose, if possible, that  $d_2 > d_1$. 
Using  (\ref{kxz}) and (\ref{x2}), we have, 
$x_1^{d_2}B \cap E = x_2^{d_2}B \cap E$. 
Therefore, $P_2(x_2,z_2)=x_2^{d_2}y_2 \in   x_1^{d_2}B \cap E$, i.e., 
the image of $P_2(x_2,z_2)$ in $B/(x_1^{d_2})$ is zero.
By (\ref{relp}), the image of 
$P_2(x_2,z_2)$ in $B/x_1^{d_2}B$ is 
$$
\gamma^r P_1(x_1,z_1)+x_1p(x_1,z_1)= \gamma^r{x_1}^{d_1}y_1+x_1p(x_1,z_1),
$$ where
$\deg_Z p(X,Z)<r$. Thus the image of $P_2(x_2,z_2)$ in $B/x_1^{d_2}B$  is non-zero by Lemma \ref{2}(i) as $s>1$ and  $d_2 > d_1$. This is a contradiction. 
%
Hence, $d_1 \le d_2$ and by symmetry, we have 
$$
d_1= d_2= d \text{~say}.
$$
 Thus, we have, 
$$
x_1^{d}B \cap k[x_1,z_1] = x_2^{d}B \cap k[x_2,z_2],
 $$
i.e., 
\begin{equation}\label{pi}
 (x_1^{d}, P_1(x_1,z_1))k[x_1,z_1]  = (x_2^{d},  P_2(x_2,z_2))k[x_2,z_2].
\end{equation}
Let $P_2(x_2,z_2) = \tau' P_1(x_1,z_1) + x_1^{d} f'$ for some $\tau', f' \in k[x_1, z_1]$. 
Since $P_i(X,Z)$'s are monic in $Z$ of degree $r$ (for $i=1,2$) and $E=k[x_1, z_1]$ is a polynomial ring, going modulo $x_1^d$, we see that 
$\tau'  \equiv \gamma^r \mod x_1^{d}k[x_1,z_1]$. 
Let $\tau= \gamma^r (\in k^{*}).$
 Replacing $\tau'$ by $\tau$, we have 
 \begin{equation}\label{P2}
  P_2(x_2,z_2) = \tau P_1(x_1,z_1) + x_1^{d} f(x_1,z_1)
 \end{equation}
 for some $f \in E= k[x_1,z_1]$. This proves (II) (ii).
Now we have,
\begin{equation}\label{y2}
y_2=\frac{P_2(x_2,z_2)}{x_2^{d}}= \frac{ \tau P_1(x_1,z_1) + x_1^{d} f(x_1,z_1)}{(\lambda x_1)^{d}} = \nu y_1 + g(x_1,z_1),  
\end{equation} 
where  $\nu= {\lambda ^{-d}}\tau ={\lambda ^{-d}}\gamma^r\in k^*$ and  $g= {\lambda ^{-d}}f \in  k[x_1,z_1]$. This proves (II) (i). Therefore, 
\begin{equation}\label{kxyz}
k[x_1,y_1,z_1]= k[x_2,y_2,z_2]= C ~(\text{say)}. 
\end{equation}
By (\ref{x2}) and (\ref{kxyz}), we have 
$x_1B \cap C= x_2B \cap C$.  Hence, 
\begin{equation}\label{s}
(X_1, P_1(0,Z_1), Q_1(0, Y_1, Z_1))k[X_1, Y_1, Z_1]=(X_2, P_2(0, Z_2), Q_2(0, Y_2, Z_2))k[X_2, Y_2, Z_2],
\end{equation}
where
$X_2=\lambda X_1$, $Z_2=\gamma Z_1+\delta(X_1)$ and $Y_2=\nu Y_1+g(X_1, Z_1)$ with $k[X_1, Y_1, Z_1]=k^{[3]}$ and 
$\lambda, \gamma \in k^*$ and $\nu=\lambda^{-d}\gamma^r$.
Hence from (\ref{s}), we have 
\begin{equation}\label{s2}
Q_2(0, \nu Y_1+g(0, Z_1), \gamma Z_1+\delta(0))= q_1(Y_1,Z_1) Q_1(0, Y_1, Z_1)+q_2(Y_1,Z_1) P_1(0,Z_1)
\end{equation}
for some $q_1(Y,Z), q_2(Y,Z) \in k[Y, Z]$ with $\deg_{Y}q_2(Y,Z)<s$. 
As $\deg_YQ_2=\deg_YQ_1=s$ and both of them are monic polynomials in $Y$,  from (\ref{s2}), we have $q_1=\nu^s\in k^*$ and  hence
\begin{equation}\label{q}
Q_2(\lambda_1X_1, \nu Y_1+g(X_1, Z_1), \gamma Z_1+\delta(X_1))= 
\nu^s Q_1(X_1, Y_1, Z_1)+q_2 P_1(X_1,Z_1)+q_3 X_1
\end{equation}
for some $q_3 \in k[X_1, Y_1, Z_1]$. Note that as $\deg_Y q_2<s$, we have 
$\deg_Y q_3<s$. 

We now show that $e_1=e_2$. 
Suppose, if possible, $e_2  > e_1$. 
Using (\ref{x2}) we have $x_1^{e_2}B\cap C= x_2^{e_2}B \cap C$. Therefore, 
$$
Q_2(x_2, y_2, z_2)= x_2^{e_2}t_2 \in x_1^{e_2}B\cap C,
$$
and hence the image of $Q_2(x_2, y_2, z_2)$ in $B/(x_1^{e_2})$ is zero. 
By (\ref{q}) and using $Q_1(x_1,y_1, z_1)=x^{e_1}t_1$, we have
\begin{eqnarray}
Q_2(x_2, y_2, z_2)&=&\nu^s Q_1(x_1, y_1, z_1)+q_2(y_1, z_1) P_1(x_1,z_1)+x_1q_3(x_1, y_1, z_1) \nonumber\\
&=&\nu^sx_1^{e_1}t_1+q(x,y,z) 
\end{eqnarray}
for some $q\in k[X,Y,Z]$ with $\deg_Yq<s$. 
Hence the image of $Q_2(x_2, y_2, z_2)$ in $B/(x_1^{e_2})$ is non-zero by Lemma \ref{2}(ii).
This is a contradiction.
Therefore, $e_2\le e_1$ and by symmetry we have
\begin{equation}\label{e}
e_1=e_2=e \text{~say}.
\end{equation}
Hence by (\ref{x2}), (\ref{kxyz}) and (\ref{e}), we have
$x_1^e B \cap C=x_2^e B \cap C$ and hence, there exist $h_1, h_2, h_3 \in k[X,Y,Z]$, $f_1, f_2, f_3 \in
 k[\lambda X, \nu Y +g(X,Z), \gamma Z+\delta(X)] (=k[X,Y,Z])$ such that
\begin{equation}\label{q2}
Q_2(\lambda X, \nu Y +g(X,Z), \gamma Z+\delta(X))= h_1Q_1(X,Y,Z)+h_2(X^dY-P_1(X,Z))+h_3X^e.
\end{equation} and 
\begin{equation}\label{q1}
Q_1(X,Y,Z)= f_1 Q_2(\lambda X, \nu Y +g(X,Z), \gamma Z+\delta(X))+ f_2(X^dY-P_1(X,Z))+f_3X^e.
\end{equation}  
Hence, from equations (\ref{q2}) and (\ref{q1}), we have 
\begin{equation}\label{t2}
t_2= \dfrac{Q_2(x_2, y_2,z_2)}{x_2^{e}}=\lambda^{-e}\left( h_1(x_1, y_1,z_1)t_1+h_3(x_1,y_1,z_1)\right).
\end{equation}  and
\begin{equation}\label{t1}
t_1= \dfrac{Q_1(x_1, y_1,z_1)}{x_1^{e}}=\lambda^{e}\left( f_1(x_2, y_2,z_2)t_2+f_3(x_2,y_2,z_2)\right).
\end{equation}  

Conversely, suppose conditions (I) and (II) hold.

Consider the $k$-algebra map  $\phi: k[X,Y,Z,T] \to B_1$ defined by
\begin{eqnarray*}\label{cisom}
\phi(X)&=&\lambda x_1,\\
\phi(Z)&=& \gamma z_1+ \delta(x_1),\\
\phi(Y)&=& \nu y_1+ g(x_1, z_1),\\
\phi(T)&=& \lambda^{-e}(h_1(x_1,y_1,z_1) t_1 +h_3(x_1,y_1,z_1)),
\end{eqnarray*}
 where $\nu= \lambda^{-d}\gamma^r$, 
  $g(x_1, z_1)= \lambda^{-d}f(x_1, z_1)$ and 
Then clearly, 
$$
\phi(X^{d}Y - P_2(X,Z))=\phi(X^{e}T - Q_2(X,Y,Z))=0.
$$ 
Thus $\phi$ induces a $k$-linear map $\bar{\phi}: B_2\to B_1$, which is surjective by equation (\ref{t1}). 
Since both $B_1$ and $B_2$ are integral domains of the same dimension, we have 
$\bar{\phi}$ is an isomorphism.
\end{proof}

We now present the correct version of  Theorem 3.13 of \cite{DD}. 

\begin{thm}\label{auto}
For $d, e \in \bN$, let $B=\dfrac{k[X,Y,Z,T]}{(X^{d}Y - P(X, Z), X^{e}T - Q(X,Y,Z))}$ be such that $P(X,Z)$ is a monic polynomial in $Z$ with $\deg_Z P = r\ge 2$ and 
$Q(X,Y,Z)$ is a monic polynomial in $Y$ with $\deg_Y Q = s\ge 2$. 
Let $x, y, z, t$  denote the images of $X, Y, Z, T$ in $B$ and $C$ denotes the subring $k[x,y,z]$ of $B$. If $\psi \in \aut(B)$, then
\begin{enumerate}
	\item[\rm(i)] $\psi(k[x,z]) = k[x,z]$.
	\item[\rm(ii)] $\psi(x) =\lambda x$ for some $\lambda \in k^*$.
	\item[\rm(iii)] $\psi((x^{d},P(x,z))k[x,z]) = (x^{d},P(x,z))k[x,z]$.
	\item[\rm(iv)] $\psi(k[x,y,z])= k[x,y,z]$, i.e., $\psi(C)=C$. 
	\item[\rm(v)] $\psi((x^{e},Q(x,y,z))C) = (x^{e},Q(x,y,z))C$.
	\item[\rm(vi)] $\psi(t)=ft+g$, where $f ({\rm mod}~ x^eC) \in k^*$ and $g \in C$. 
\end{enumerate}
\end{thm}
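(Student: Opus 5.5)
The plan is to apply Theorem \ref{isomclass} with $B_1=B_2=B$, $P_1=P_2=P$, $Q_1=Q_2=Q$ and $\psi$ the given automorphism. The hypotheses $r\ge 2$ and $s\ge 2$ place us in the first case of (\ref{mlc}). Hence $\ml(B)=k[x]$ by \cite[Theorem 3.9]{DD}, and the theorem applies with $r>1$ and $s>1$.

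Part (I) of Theorem \ref{isomclass} gives $\psi(x)=\lambda x$ and $\psi(z)=\gamma z+\delta(x)$ with $\lambda,\gamma\in k^*$. This proves (ii). It also shows that $\psi(k[x,z])\subseteq k[x,z]$. Equality follows by applying the same argument to $\psi^{-1}$, or directly, since $x=\lambda^{-1}\psi(x)$ and $z=\gamma^{-1}(\psi(z)-\delta(\lambda^{-1}\psi(x)))$. This gives (i).

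For (iii), I would use the identity $x^dB\cap k[x,z]=(x^d,P(x,z))k[x,z]$. This is implicit in the proof of Theorem \ref{isomclass} and follows from Lemma \ref{lem1}-type arguments. Since $\psi(x^dB)=\lambda^d x^dB=x^dB$ and $\psi(k[x,z])=k[x,z]$, the ideal $x^dB\cap k[x,z]$ is mapped onto itself. Alternatively, (II)(ii) gives $\psi(P(x,z))=\gamma^rP(x,z)+x^df(x,z)$, which together with $\psi(x^d)=\lambda^dx^d$ yields $\psi$ of the ideal $\subseteq$ the ideal, and $\psi^{-1}$ gives the reverse inclusion.

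For (iv), (II)(i) gives $\psi(y)=\nu y+g(x,z)\in C$, so $\psi(C)\subseteq C$. Inverting as before, using (i), gives equality.

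For (v), I would use $x^eB\cap C=(x^e,Q,x^dy-P)C=(x^e,Q)C$, where the last equality holds because $x^dy-P=0$ in $C$. Since $\psi$ preserves both $x^eB$ and $C$, it preserves this ideal. This is the content of (II)(c) read inside $C$.

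For (vi), equation (\ref{t2}) with $B_1=B_2=B$ gives $\psi(t)=\lambda^{-e}(h_1t+h_3)$ with $h_1,h_3\in C$. Set $f=\lambda^{-e}h_1$ and $g=\lambda^{-e}h_3$. It remains to show that $f \bmod x^eC$ is a unit of $k$. First, $h_1$ is a unit modulo $x^eC$. Indeed, comparing (\ref{t2}) with (\ref{t1}) applied to $\psi^{-1}$, we get $t=\psi(f')\psi(t)+\psi(g')$, which gives $t=a t+b$ with $a\equiv \psi(f')f$ and $a,b\in C$. Since $B=C\oplus\bigoplus_{n\ge1}\ldots$ is not obviously graded, I would instead argue via (\ref{q2}) that $Q(\psi\text{-variables})\equiv h_1Q \pmod{(x^e,x^dy-P)}$. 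Comparing leading $Y$-coefficients, as in the proof of (\ref{q}) where $q_1=\nu^s$, should give $h_1\equiv\nu^s$ modulo $(x,P)$. That is only modulo $x$, however, so I must refine it. The natural approach is to choose $h_1$ appropriately: modulo $(x^e,Q)C$ we may replace $h_1$ by $\nu^s$ after absorbing the difference into $h_3$. Concretely, $x^e t\cdot(h_1-\nu^s)$ lies in $(x^e,Q)C$, and modifying $h_1$ by multiples of $x^e$ and $Q$ alters $h_3$ accordingly, since $Qt=x^et\cdot t$. The delicate point is justifying that $h_1-\nu^s\in (x^e,Q)C$. I expect this to be the main obstacle. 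I would try to derive it from Lemma \ref{2}(ii)-style degree comparison in $Y$, applied to (\ref{q}) and (\ref{q2}) together, using $s\ge2$. If that fails, I would settle for the weaker claim that $f$ is a unit modulo $x^eC$, obtained from the composition $\psi^{-1}\psi=\mathrm{id}$.
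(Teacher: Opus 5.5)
Your parts (i)--(v) are fine and follow the paper: each is read off from the proof of Theorem~\ref{isomclass} with $B_1=B_2=B$, and (v) comes from $x^eB\cap C=(x^e,Q)C$.

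The gap is in (vi). Your main line tries to prove $h_1\equiv\nu^s$ modulo $x^eC$ (or modulo $(x^e,Q)C$), which would give the literal ``$f \bmod x^eC\in k^*$''. That claim is false in general, so no $Y$-degree comparison can deliver it. Here is a counterexample. Take $P=Z^2$, $Q=Y^2-XYZ^2$, $d=2$, $e=4$. In $C$ we have $z^2=x^2y$, so $Q(x,y,z)=(1-x^3)y^2=x^4t$. For $\lambda\in k^*$ with $\lambda^3\neq1$, define $x\mapsto\lambda x$, $y\mapsto\lambda^{-2}y$, $z\mapsto z$, and
\[
t\mapsto\lambda^{-8}\bigl(1+(1-\lambda^3)x^3\bigr)t+\lambda^{-8}(1-\lambda^3)x^2y^2 .
\]
This respects both relations and is an automorphism of $B$; its inverse is the same map with $\lambda^{-1}$. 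Here $h_1-\nu^s=\lambda^{-4}(1-\lambda^3)x^3$. This does not lie in $(x^4,Q)C=(x^4,y^2)C$ (send $y\mapsto0$), let alone in $x^4C$.

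Your proposed normalisation is also wrong. Changing $h_1$ by $aQ$ changes $h_1t$ by $ax^et^2\notin C$. So only multiples of $x^e$ can be moved into $h_3$, not multiples of $Q$.

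So the ``$\in k^*$'' in (vi) cannot be meant literally. What is true, and what the paper's proof actually establishes, is that $f$ is a \emph{unit} of $C/x^eC$. Your fallback asserts this but does not prove it, and you already have the ingredient. Comparing (\ref{q2}) modulo $X$, exactly as for (\ref{s2}), gives $h_1\equiv\nu^s\pmod{xC}$. Since $x$ is nilpotent in $C/x^eC$, $h_1$ is a unit there, and hence so is $f=\lambda^{-e}h_1$.

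Alternatively, your $\psi^{-1}\psi=\mathrm{id}$ argument also works. What replaces the missing grading is this fact: if $a\in C$ and $at\in C$, then $a\in x^eC$. To see it, note $aQ=x^e(at)\in x^eC$. Next, $Q$ is monic in $Y$ over $C/xC\cong(k[Z]/(P(0,Z)))[Y]$, hence a non-zerodivisor there. Since $x$ is a non-zerodivisor on $C$, induction along the filtration $C\supseteq xC\supseteq\cdots\supseteq x^eC$ makes $Q$ a non-zerodivisor on $C/x^eC$, which gives $a\in x^eC$. Applying this to $(1-\psi(f')f)t\in C$ gives $\psi(f')f\equiv1\pmod{x^eC}$. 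The same fact shows that $f$ is well defined modulo $x^eC$, which is why the example above genuinely contradicts the literal statement.
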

\begin{proof}
(i), (ii), (iii) and (iv) follow from the proof of Theorem \ref{isomclass} (see (\ref{kxz}), (\ref{x2}), (\ref{pi}), (\ref{kxyz})). 
By (ii) and (iv), we have $\psi(x^e B \cap C)= x^eB \cap C$ and hence (v) follows as $x^e B \cap C= (x^e, Q(x,y,z))C$.
Since $\psi(Q(x,y,z))\in (x^e, Q(x,y,z))C$, there exist $f_1, f_2, f_3 \in k[X,Y,Z]$ such that 
$$
Q(\psi(X),\psi(Y),\psi(Z)))= f_1Q(X,Y,Z)+f_2(X^dY-P(X,Z))+f_3X^e.
$$
As $\psi$ is an automorphism of $B$, we see $f_1$ is a unit in $k[X,Y,Z]/(X^dY-P(X,Z), X^e)=C/x^eC$ and so (vi) follows.
\end{proof}

\begin{rem}\label{REM}
{\em 
Let the notation be as in Theorem \ref{isomclass}.

(i) If $r_1=1$, i.e., $P_1= Z+ p_1(X)$ for some $p_1 \in k[X]$, then 
$$
B_1 = \dfrac{k[X,Y,Z,T]}{(X^{d_1}Y - Z- p_1(X), X^{e_1}T - Q_1(X,Y,Z))} 
\cong \dfrac{k[X,Y,T]}{(X^{e_1}T - Q_1(X,Y, X^{d_1}Y-p_1(X)))}
$$
is a Danielewski surface. For example 
$$
\dfrac{k[X,Y,Z,T]}{(X^{2}Y - Z, X^{e_1}T -Y^4)} 
\cong \dfrac{k[X,Y,T]}{(X^{e_1}T -Y^4)} \cong 
\dfrac{k[X,Y,Z,T]}{(X^{d}Y - Z, X^{e_1}T -Y^4)}
$$
for any $d >0$. Now following the notation of Theorem \ref{isomclass}, we have $z_2= x^{d-2}z_1$.

(ii) If $s_1=1$, i.e., $Q_1= Y+ q_1(X,Z)$ for some $q_1 \in k[X,Z]$, then 
$$
B_1\cong \dfrac{k[X,Z,T]}{(X^{d_1+e_1}T - X^{d_1}q_1(X,Z) - P_1(X,Z))},
$$
a Danielewski surface.
Now if $B_1\cong B_2$ with $s_1=s_2=1$, then from the well known classification of Danielewski surfaces, 
we have $d_1+e_1=d_2+e_2$ (\cite[Theorem 9]{P}). However, 
it can happen that $d_1\neq d_2$. For instance, we have  
$$
B \cong \dfrac{k[X,Z,T]}{(X^3T-Z^2)}\cong \dfrac{k[X,Y, Z,T]}{(XY-Z^2,X^2T-Y)}\cong \dfrac{k[X,Y, Z,T]}{(X^2Y-Z^2,XT-Y)}
$$

(iii) It can happen $r_1=1, s_1>1$ and $r_2>1$, $s_2=1$.
For example,
$$
\dfrac{k[X,Y,Z,T]}{(X^2Y-Z, X^4T-Y^2)}\cong \dfrac{k[X,Y,T]}{(X^4T-Y^2)}\cong \dfrac{k[X,Y,Z,T]}{(X^2Y-Z^2, X^2T-Y)}.
$$

(iv)  If $r=s=1$, i.e., $P_1= Z+ p_1(X)$ and  
$Q_1(X,Y,Z)= Y+ q_1(X,Z)$, we have 
$$
F:= X^{e_1}T - Q_1(X,Y, X^{d_1}Y-p_1(X))= X^{e_1}T-Y-q_1(X, X^{d_1}Y-p_1(X))
$$
Now, $F$ is a linear polynomial in $T$ such that $F(0, Y, T)= -Y-q_1(0, -p_1(0))$. Hence, by Sathaye's linear plane \cite{S1}, we have 
$B_1 \cong k^{[2]}$.

(v) Let
$$
B_1= \dfrac{k[X,Y,Z,T]}{(X^{2}Y - Z^2, X^4T - Y^2)} 
\text{~~and~~} 
B_2= \dfrac{k[X,Y,Z,T]}{(X^{2}Y - Z^2, X^4T - Y^2+XYZ^2)} 
$$
For $i=1,2$, let $x_i, y_i, z_i, t_i$ denote the images of $X_i, Y_i, Z_i, T_i$ in $B_i$.
Consider the $k$-algebra homomorphism $\phi: B_1\to B_2$ defined by
\begin{eqnarray*}
\phi(x_1)&=&x_2\\
\phi(z_1)&=&z_2\\
\phi(y_1)&=&y_2\\
\phi(t_1)&=&  (1+x_2^3)t_2+y_2z_2^2.
\end{eqnarray*}
Then $\phi((1-x_1^3)t_1)= t_2$ 
and hence $\phi$ is surjective and therefore an isomorphism. 
However, $\phi$ does not extend to an automorphism of the polynomial ring $k[X,Y,Z,T]$ as
claimed in Theorem 3.11 of \cite{DD}.

}
\end{rem}

%
%
%

{\bf Acknowledgements.} The authors acknowledge Parnashree Ghosh and Debojyoti Saha for carefully going through the paper.

  \end{document}